\tikzstyle{block}=[draw opacity=0.7,line width=1.4cm]
\newtheorem{theorem}{Theorem}
\newtheorem{example}{Example}
\newtheorem{lemma}{Lemma}
\newtheorem{remark}{Remark}
\numberwithin{equation}{section}
\numberwithin{theorem}{section}
\numberwithin{lemma}{section}
\numberwithin{corollary}{section}
\numberwithin{definition}{section}
\numberwithin{example}{section}
\numberwithin{remark}{section}
\begin{document}
\title[Matrix Schr{\"o}dinger Unitary Groups]{On Matrix Schr{\"o}dinger Unitary Groups in Particular Representations of Finite Dimensional Quantum Dynamical Systems}
\author{Fredy Vides}
\address
{Escuela de Matem{\'a}tica y Ciencias de la Computaci{\'o}n \newline%
\indent Universidad Nacional Aut{\'o}noma de Honduras}%
\email{fvides@unah.edu.hn}
\urladdr{http://fredyvides.6te.net}
\keywords{Matrix Schr{\"o}dinger Semigroups, Finite Dimensional Quantum Systems, Representative Graphs and Particles.}
\subjclass[2010]{Primary 47D08, 47A56; Secondary 15A16, 65F60}
\thanks{This research has been performed in part thanks to the financial support of the School of Mathematics and Computer Science of the National Autonomous University of Honduras.}
\date{\today}

\begin{abstract}
In this paper we study some particular types of matrix Schr{\"o}dinger unitary groups of the form $\exp(-it\mathbb{H})$ where $\mathbb{H}\in M_N(\mathbf{C})$ is the Hamiltonian of a given quantum dynamical system modeled in the finite dimensional Hilbert space $\mathcal{H}$. Once we have defined a particular matrix Schr{\"o}dinger unitary group we perform some estimates for its approximation and its corresponding implementation in the numerical solution of the finite dimensional Schr{\"o}dinger evolution equation to that it is related.
\end{abstract}
\maketitle

\section{Introduction} \label{intro}

In this work we will focus our attention in the study some properties of Matrix Schr{\"o}dinger semigroups that will be described in general by the set $\{\mathbb{S}_t:=\exp(-it\mathbb{H}):t\in \mathbf{R}\}$, where $\mathbb{H}\in\mathscr{L}(\mathcal{H})$ is the hamiltonian of a prescribed quantum dynamical system and $\mathcal{H}:=\mathcal{H}(G)$ is a finite dimensional Hilbert space related in some suitable sense to a weighted graph $G:=(V_G,E_G)$ that will be called representative graph, the elements of the semigroup clearly satisfy the conditions: (i) $S_0=\mathbf{1}_{\mathcal{H}}$, (ii) $\mathbb{S}_t \cdot \mathbb{S}_s(\cdot)=\mathbb{S}_{t+s}(\cdot)$ and (iii) $\lim_{h\to0^+}\mathbb{S}_h \phi=\phi, \forall \mathbf{\phi}\in \mathcal{D}\subseteq \mathcal{H}$, besides the condtion (iv) $\norm{}{\mathbb{S}_t \psi_0}=\norm{}{\psi_0}$, will be also satisfied when $\mathbb{H}$ is self adjoint. The Hamiltonian $\mathbb{H}\in\mathscr{L}(\mathcal{H})$ presented above is related to a prescribed quantum dynamical system trough the evolution equation given by:
\begin{equation}
E \ket{\psi(t)}=\mathbb{H}\ket{\psi(t)}
\label{abs_ev}
\end{equation}
with $\psi(0)=\psi_0\in\mathcal{H}$ and where $E\longrightarrow i/\hbar D_t$, here $\mathbb{H}\in\mathscr{L}(\mathcal{H})$ will in general have the form $\mathbb{H}=\mathbb{H}_0+\mathbb{H}_1$ with $\mathbb{H}_0\in M_{N}(\mathbf{C})$ self-adjoint, and with $\mathbb{H}_1\in M_{N}(\mathbf{C})$ diagonal, for simplicity, in this work we will consider our scale such that $\hbar=1$.

In the following sections we will implement some operator theory techniques in the theoretical analysis of the approximation schemes for the matrix Schr{\"o}dinger semigroups and in the end some numerical implementations will be presented.

\section{Particular Representations of Finite Dimensional Quantum Dynamical Systems}

In this section we will present the main ideas behind particular representation techniques of finite dymensional quantum dynamical systems.

\subsection{Generalized Matrix Form of Kets and Bras} In this work we will consider that the space of states $\mathcal{H}$ of a finite dimensional quantum dynamical system, is a finite dimensional Hilber space, whose inner product is induced by an inner product matrix $\mathbb{M}_\mathbb{H}\in M_N(\mathbf{C})$, wich is a symmetric positive definite matrix that satisfies the relation
\begin{equation}
\braket{u|v}:=u^\ast \mathbb{M}_\mathbb{H} v
\end{equation}
since $\mathbb{M}_\mathbb{H}$ is symmetric positive definite, we can obtain a factorization of the form $\mathbb{M}_\mathbb{H}:=\mathbb{W}_\mathbb{H}^\ast \mathbb{W}_\mathbb{H}$, where $\mathbb{W}_\mathbb{H}\in M_N(\mathbf{C})$ represents the formal square root of $\mathbb{M}_\mathbb{H}$. From the above relations we can obtain matrix representations for kets, bras and induced norm by $\braket{\cdot|\cdot}$ operations according to the rules:
\begin{eqnarray}
\ket{u}&\longleftrightarrow& u\\
\bra{u} &\longleftrightarrow& u^\ast \mathbb{M}_\mathbb{H}\\
\norm{}{u}&\longleftrightarrow&  \braket{u|u}^{1/2}.
\end{eqnarray}
from the definition of the norm operation, and if we denote by $\scalprod*{2}{\cdot}{\cdot}$ the usual inner product in $\mathbb{C}^N$ given by $\scalprod*{2}{x}{y}:=y^\ast x$ it can be seen that $\braket{x|y}=\scalprod*{2}{y}{x}$ and also that
\begin{equation} 
\norm{}{u}:=\scalprod*{2}{\mathbb{W}_\mathbb{H}u}{\mathbb{W}_\mathbb{H}u}^{1/2}=\norm*{2}{\mathbb{W}_\mathbb{H}u},
\end{equation}
in a similar way the induced matrix norm in $M_N(\mathbf{C})$ by $\norm{}{\cdot}$ can be expressed in the form
\begin{equation}
\norm{}{\mathbb{A}}:=\sup_{\norm{}{u}=1}\norm{}{\mathbb{A}u}=\norm*{2}{\mathbb{W}_\mathbb{H}\mathbb{A}\mathbb{W}_\mathbb{H}^{-1}};
\end{equation}
from the relation between $\braket{\cdot|\cdot}$ and $\scalprod*{2}{\cdot}{\cdot}$, it can be seen that for a given matrix $\mathbb{A}\in M_N(\mathbf{C})$, one can compute its adjoint $\mathbb{A}^\dagger$ with respect to $\braket{\cdot|\cdot}$ using the following expression
\begin{equation}
\mathbb{A}^\dagger:=\mathbb{M}_\mathbb{H}^{-1}\mathbb{A}^\ast\mathbb{M}_\mathbb{H}.
\end{equation}

\subsection{Particular Ladder Operators} For a given finite dimensional quantum dynamical system with space of states $\mathcal{H}:=\mathcal{H}(G)$, one can find or obtain an orthonormal system $\hat{X}:=\{\ket{k},1\leq k\leq N\}\subset \mathcal{H}$ with respect to $\braket{\cdot|\cdot}$ that will be called particular analysis basis, and that is related to a given representative graph $G:=(V_G,E_G)$ with $V_G:=\{1,\cdots,N\}$ and $E_G:=\{(\braket{ij},w_{ij})\}$ according to the rule
\begin{equation}
k \longleftrightarrow \ket{k}.
\end{equation}
\begin{remark}
It is important to note that an important idea behind the particular analysis basis, is to find a basis that provides some advantage for the analysis of the quantum dynamical system in wich space of states we define it.
\end{remark}

Once we get an orthonormal system $\hat{X}\subset \mathcal{H}$ it is possible to obtain a matrix $\mathbb{N}\in M_N(\mathbf{C})$ that satisfies $\mathbb{N}\ket{n}=n\ket{n}$ and will be defined by
\begin{equation}
\mathbb{N}:=\sum_{k=1}^N k\ket{k}\bra{k}
\end{equation}
in a similar way one can define two particular ladder operators $a^\dagger, a \in M_N(\mathbf{C})$ defined implicitly by 
\begin{eqnarray}
a^\dagger\ket{n}&:=&\sqrt{ [n+1|N]}\ket{[n+1|N]}\\
a\ket{n}&:=&\sqrt{[n|N]}\ket{[n-1|N]}
\end{eqnarray}
where $[p|q]:=1+((p-1)\mod q)$. From the implicit definition of the particular Ladder operators we can obtain the following explicit definitions
\begin{eqnarray}
a^\dagger&:=&\ket{1}\bra{N}+\sum_{k=1}^{N-1} \sqrt{k+1}\ket{k+1}\bra{k}\\
a&:=&\ket{N}\bra{1}+\sum_{k=2}^N \sqrt{k} \ket{k-1}\bra{k}.
\end{eqnarray}
It can be seen that $\mathbb{N}=a^\dagger a$. Since we will have that $\hat{X}\subset \mathcal{H}$ is an orthonormal system,  also that $\mathbb{H}_0^\dagger=\mathbb{H}_0$, and $\ket{m}=(m!)^{-1/2}(a^\dagger)^m\ket{1}$, and if we take $\mathbb{E}_1:=\ket{1}\bra{1}$, then we can express $\mathbb{H}_0\in M_N(\mathbf{C})$ in the form:
\begin{eqnarray}
\mathbb{H}_0&:=&\sum_{\braket{kl}} \braket{k|\mathbb{H}_0|l} \ket{k}\bra{l} \label{gen_hamilton} \\
&=&\sum_{\braket{kl}} \frac{\braket{k|\mathbb{H}_0|l}}{\sqrt{k!l!}}(a^\dagger)^k \mathbb{E}_1 a^l\\
&=&\sum_{\braket{k}} \frac{\braket{k|\mathbb{H}_0|k}}{k!}(a^\dagger)^k \mathbb{E}_1 a^k +\sum_{\braket{k>l}} \frac{\braket{k|\mathbb{H}_0|l}}{\sqrt{k!l!}}((a^\dagger)^k \mathbb{E}_1 a^l+(a^\dagger)^l \mathbb{E}_1 a^k)
\end{eqnarray}
that will be called particular representation of $\mathbb{H}_0\in M_N(\mathbf{C})$ with respect to $\hat{X}\subset \mathcal{H}$.

\section{Time Evolution and Matrix Schr{\"o}dinger Unitary Groups}
For a finite dimensional quantum dynamical system with state of spaces $\mathcal{H}(G)$, whose time evolution is modeled by the equation
\begin{equation}
\left \{
\begin{array}{l}
i\ket{\psi'(t)}=\mathbb{H}\ket{\psi(t)}\\
\ket{\psi(0)}=\ket{\psi_0}
\end{array}
\right .
\label{ivp}
\end{equation}
one can obtain an expression for its wave function $\ket{\psi(t)}$ using the matrix valued function $U_t:\mathbf{R}\longrightarrow \mathcal{H}:t\longmapsto e^{-it\mathbb{H}}$, in the form
\begin{equation}
\ket{\psi(t)}:=e^{-it\mathbb{H}}\ket{\psi_0}
\end{equation}
in many applications $\mathbb{H}\in M_N(\mathbf{C})$ is a structured matrix obtained using several matrix operations between matrices of lower order. In the following subsection we will consider particularly important cases of interacting and non-interacting quantum systems.

\subsection{Matrix Hamiltonians} For a sequence of finite dimesional Hilbert spa-ces of the form $\{\mathcal{H}_\alpha\}_{\alpha=1}^M$, if we can obtain a particular analysis bases sequence $\{\hat{X}_\alpha\}$, such that the non-interacting Hamiltonian of each finite quantum dynamical system whose space of states is defined by $\mathcal{H}_\alpha$ can be particularly represented by $\mathbb{H}_\alpha$ and defined according to \eqref{gen_hamilton}, obtaining a sequence $\{\mathbb{H}_\alpha\}$ that can be used to compute the non-interacting part an interacting Hamiltonian $\mathbb{H}\in M_N(\mathbf{C})$ using the following expression
\begin{equation}
\mathbb{H}_0:=\bigoplus_{\braket{\alpha}} \mathbb{H}_\alpha=\sum_{\braket{\alpha}} \{\mathbb{H}_\alpha\}^{\odot e_\alpha}
\end{equation}
where $\{e_\alpha:=(\delta_{r,\alpha})_r\}_\alpha\subset (\mathbf{Z}_0^+)^M$ is the canonical basis for the space of multi-indexes of length $M\in\mathbf{Z}^+_0$, and where in general for any multiindex $r\in(\mathbf{Z}^+_0)^M$, $\{\mathbb{H}_\alpha\}^{\odot r}$ is defined by
\begin{equation}
\{\mathbb{B}_\alpha\}^{\odot r}:=\bigotimes_{\braket{\alpha}}\mathbb{B}_\alpha^{r_\alpha}
\end{equation}
here $\mathbf{1}_\alpha$ denotes the identity matrix in $\mathcal{H}_\alpha$. When we want to compute a solution to \eqref{ivp} and if $\mathbb{H}:=\mathbb{H}_0+\mathbb{H}_1$, where $\mathbb{H}_0$ is an non-interacting hamiltonian and $\mathbb{H}_1$ is diagonal, it is useful to compute an integrating factor that is a solution to the matrix differenial equation
\begin{equation}
\left \{
\begin{array}{l}
iU_t'=\mathbb{H}_0 U_t\\
U_0=\mathbf{1}
\end{array}
\right .
\label{ivp_matrix}
\end{equation}
A solution to this equation will have the form $U_t:=e^{-it\mathbb{H}_0}$, and clearly the set $\{U_t,t\in\mathbf{R}\}$ will be an unitary group of operators, now this integrating factor can be used to solve \eqref{ivp} using the formula
\begin{equation}
\ket{\psi(t)}=U_t\ket{\psi_0}+\int_0^t ds U_{t-s}\mathbb{V}\ket{\psi(t)}
\end{equation}
whose solvability have been discussed in \cite{Vides2}, in particular the case where both the interacting and non-interacting parts of the hamiltonian $\mathbb{H}$ can be expressed in a non-interacting Hamiltonians form, i.e., $\mathbb{H}=\mathbb{H}_0$, it is important because we can express the solution to \eqref{ivp} in the form
\begin{equation}
\ket{\psi(t)}:=U_t\ket{\psi_0}=e^{-it\mathbb{H}_0}\ket{\psi_0} \label{gen_sol}
\end{equation}
where
\begin{eqnarray}
\ket{\psi(t)}&:=&\bigotimes_{\braket{\alpha}}\ket{\psi_\alpha(t)}\\
\ket{\psi_0}&:=&\bigotimes_{\braket{\alpha}}\ket{\psi_{0,\alpha}}.
\end{eqnarray}
It is also important to note that for some complex systems modeled by non-interacting type Hamiltonians, even if their wave funtion can be expressed like \eqref{gen_sol}, the computation of $e^{-it\mathbb{H}_0}$ that can be expressed in the form 
\begin{equation}
e^{-it\mathbb{H}_0}:=\bigotimes_{\braket{\alpha}} e^{-it\mathbb{H}_\alpha}
\label{exp_kron}
\end{equation}
can become a hard computational problem, this is the reason to implement some numerical techniques that will be presented in the next section.

\subsection{Approximation of Matrix Schr{\"o}dinger Unitary Groups} When we want to compute an approximation of a particular matrix Schr{\"o}dinger unitary group $\{U_t:=e^{-it\mathbb{H}}, t\in\mathbf{R}\}$, we can start approximating $U_t$ in $[0,\tau]\subset \mathbf{R}$, for  a given $1>\tau:=h_t/\norm{}{\mathbb{H}}\in\mathbf{R}$, with $\mathbf{R}\ni h_t < 1$, this restriction for $\tau$ will help to ensure that the sum
\begin{equation}
\mathbb{U}:=\sum_{k=0}^{m}\frac{(-i\tau\mathbb{H})^k}{k!}
\label{disc_sem}
\end{equation}
remains bounded with respect to $\norm{}{\cdot}$. Clearly this sum represents the first $m$ terms of the Taylor polynomial of $U_\tau$ around $t=0$, now if we take the Pad{\'e} representation of the approximant $\mathbb{U}$ we obtain the following expression:
\begin{equation}
\mathbb{U}:=R_{pp}(-i\tau\mathbb{H})=D_{pp}(-i\tau\mathbb{H})^{-1}N_{pp}(-i\tau\mathbb{H})
\label{group_factors}
\end{equation}

with
\begin{eqnarray}
N_{pq}(-i\tau\mathbb{H})&:=&\sum_{j=0}^p\frac{(p+q-j)!p!}{(p+q)!j!(p-j)!}(-i\tau\mathbb{H})\\
D_{pq}(-i\tau\mathbb{H})&:=&\sum_{j=0}^q\frac{(p+q-j)!q!}{(p+q)!j!(q-j)!}(i\tau\mathbb{H})
\end{eqnarray}
It can be seen that taking $\mathbb{S}:=N_{pp}(-i\tau\mathbb{H})$, we will have that $D_{pp}(-i\tau\mathbb{H})=\mathbb{S}^\dagger$, and if we take $\mathbb{S}^+:=(\mathbb{S}^\dagger)^{-1}$, then we can express \eqref{group_factors} in the form
\begin{equation}
\mathbb{U}=\mathbb{S}^+\mathbb{S}
\end{equation}
From the relation of \eqref{group_factors} with the Taylor expansion of $e^{-i\tau\mathbb{H}}$ and the Picard's resctriction for $\tau\in\mathbf{R}^+$ in \eqref{disc_sem}, we can obtain the following estimate
\begin{lemma} \label{lema_conv1}
$\norm*{2}{e^{-i\tau\mathbb{H}}-\mathbb{U}}\leq\left|\frac{1}{(2p+1)!}-c_{p,2p+1}\right|h_{\tau}^{2p+1}$
\end{lemma}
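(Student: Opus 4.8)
The plan is to reduce the matrix estimate to the scalar behaviour of the diagonal Padé error and then propagate it through the operator $2$-norm. First I would record the defining property of the $(p,p)$ diagonal Padé approximant of the exponential: the rational function $R_{pp}(z)=D_{pp}(z)^{-1}N_{pp}(z)$ agrees with $e^{z}$ in every Taylor coefficient up to and including order $2p$, so that the entire scalar difference is
\[
g(z):=e^{z}-R_{pp}(z)=\left(\frac{1}{(2p+1)!}-c_{p,2p+1}\right)z^{2p+1}+\sum_{k\geq 2p+2}g_k\,z^{k},
\]
where $c_{p,2p+1}$ is exactly the coefficient of $z^{2p+1}$ in the Taylor expansion of the rational approximant built in \eqref{group_factors}. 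This isolates the leading discrepancy as precisely the prefactor appearing in the statement, and reduces the whole problem to estimating $g$ on the spectrum of $-i\tau\mathbb{H}$.

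Next I would substitute $z=-i\tau\mathbb{H}$, which is legitimate because $D_{pp}(-i\tau\mathbb{H})=\mathbb{S}^{\dagger}$ is invertible under the Picard restriction on $\tau$ (this is what makes $\mathbb{S}^{+}=(\mathbb{S}^{\dagger})^{-1}$ exist and $\mathbb{U}=\mathbb{S}^{+}\mathbb{S}=R_{pp}(-i\tau\mathbb{H})$ well defined). Writing $h_{\tau}:=\norm*{2}{-i\tau\mathbb{H}}=\tau\,\norm*{2}{\mathbb{H}}<1$ and applying the triangle inequality together with submultiplicativity of $\norm*{2}{\cdot}$ to the power series for $g$ gives
\[
\norm*{2}{e^{-i\tau\mathbb{H}}-\mathbb{U}}\leq\left|\frac{1}{(2p+1)!}-c_{p,2p+1}\right|h_{\tau}^{2p+1}+\sum_{k\geq 2p+2}|g_k|\,h_{\tau}^{k}.
\]
At this stage the leading term already matches the claim; everything hinges on disposing of the tail.

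The main obstacle is exactly this tail: to recover the stated inequality one must show that the higher-order contribution does not spoil the leading term. I would handle it by exploiting the structure available when $\mathbb{H}^{\dagger}=\mathbb{H}$. In that case a conjugation by $\mathbb{W}_\mathbb{H}$ turns $\mathbb{H}$ into a standard Hermitian matrix $\mathbb{W}_\mathbb{H}\mathbb{H}\mathbb{W}_\mathbb{H}^{-1}$ with the same (real) eigenvalues, and since $\norm*{2}{\cdot}$ is invariant under this similarity combined with the $\mathbb{M}_\mathbb{H}$-norm identity, the matrix norm collapses to $\max_j|g(-i\tau\lambda_j)|$ over eigenvalues with $\tau|\lambda_j|\leq h_{\tau}$. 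On the imaginary axis both $e^{iy}$ and $R_{pp}(iy)$ have unit modulus (this is the $D_{pp}=\mathbb{S}^{\dagger}$ relation at the scalar level), so $|g(iy)|$ is twice the sine of half the phase defect and is bounded by the phase defect itself; a single Taylor estimate of this odd, real-analytic defect then yields $|g(iy)|\leq\left|\frac{1}{(2p+1)!}-c_{p,2p+1}\right||y|^{2p+1}$ for $|y|\leq h_{\tau}<1$, and maximizing over $j$ delivers the bound.

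I expect the delicate point to be precisely that last scalar estimate: establishing a genuine uniform bound for $|g(iy)|/|y|^{2p+1}$ on $[0,h_{\tau}]$ (for instance via monotonicity of the phase defect) rather than merely the asymptotic relation $|g(iy)|\sim\left|\frac{1}{(2p+1)!}-c_{p,2p+1}\right||y|^{2p+1}$ as $y\to0$. If one prefers to dispense with self-adjointness, the alternative is to invoke the closed integral remainder formula for the Padé error of the exponential, to bound $\norm*{2}{D_{pp}(-i\tau\mathbb{H})^{-1}}$ through the resolvent under the Picard condition, and then to compare the explicit constant produced by the integral against $\left|\frac{1}{(2p+1)!}-c_{p,2p+1}\right|$; this route is rigorous for general $\mathbb{H}$ but shifts the bulk of the work into verifying that the integral-generated constant does not exceed the leading Taylor coefficient.
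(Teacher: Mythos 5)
Your first two steps coincide with what the paper actually does: the paper's proof likewise writes $e^{-i\tau\mathbb{H}}-\mathbb{U}=\sum_{k\geq 2p+1}\bigl(\tfrac{1}{k!}-c_{p,k}\bigr)(-i\tau\mathbb{H})^k$ using the order-$2p$ agreement of the diagonal Pad\'e approximant with the exponential, and then applies the triangle inequality and submultiplicativity to reach a series in powers of $h_\tau=\tau\norm*{2}{\mathbb{H}}$. Where you genuinely differ is in what happens next. The paper passes from the full series $\sum_{k\geq 2p+1}\left|\tfrac{1}{k!}-c_{p,k}\right|h_\tau^{k}$ to its first term alone, silently discarding the nonnegative tail $k\geq 2p+2$ (the factor $(-\tau)^k$ kept inside an outer absolute value suggests an intended alternating-series bound, but the preceding triangle inequality has already made every term nonnegative, so the truncation is not a valid inequality). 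You, by contrast, correctly isolate that tail as the entire difficulty and propose a spectral route to absorb it: diagonalize, reduce to $\max_j|g(-i\tau\lambda_j)|$, use $|e^{iy}|=|R_{pp}(iy)|=1$ to write the scalar error as $2|\sin(\delta(y)/2)|\leq|\delta(y)|$ for the phase defect $\delta$, and bound $|\delta(y)|$ by its leading Taylor term.

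The gap is that this last scalar inequality, $|\delta(y)|\leq\left|\tfrac{1}{(2p+1)!}-c_{p,2p+1}\right||y|^{2p+1}$ uniformly on $[0,h_\tau]$, is exactly the content of the lemma at the scalar level, and you assert it rather than prove it --- as you yourself flag. It is true for $p=1$, where $\delta(y)=y-2\arctan(y/2)$ and the bound $0\leq\delta(y)\leq y^{3}/12=\left|\tfrac{1}{3!}-c_{1,3}\right|y^{3}$ follows from the alternating Taylor series of $\arctan$; an enveloping-series or monotonicity argument of that type is what must be supplied for general $p$, and as written your proposal stops where the real work begins (the paper's proof stops there too, so you have reproduced its argument made honest, not completed it). Two secondary cautions: the lemma is stated in $\norm*{2}{\cdot}$, whereas $\mathbb{H}^{\dagger}=\mathbb{H}$ only gives normality with respect to the $\mathbb{M}_\mathbb{H}$-inner product, so unless $\mathbb{M}_\mathbb{H}=\mathbf{1}$ (or one reads the paper's subsequent factorization $\mathbb{H}=\mathbb{V}\mathbb{D}\mathbb{V}^{\ast}$ as assuming genuine Hermitianity) your eigenvalue reduction acquires a condition-number factor $\kappa_2(\mathbb{W}_\mathbb{H})$; and the integral-remainder alternative you mention produces the leading constant only multiplied by extra factors such as $e^{\tau\norm*{2}{\mathbb{H}}}\norm*{2}{D_{pp}(-i\tau\mathbb{H})^{-1}}$, so it would require the same sharpening before it yields the stated bound.
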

\begin{proof}
\begin{eqnarray}
\norm*{2}{e^{-i\tau\mathbb{H}}-\mathbb{U}}&\leq&\norm*{2}{\sum_{k=2p+1}^\infty (\frac{1}{k!}-c_{p,k})(-i\tau\mathbb{H})^k}\\
&\leq&\left|\sum_{k=2p+1}^\infty \left|\frac{1}{k!}-c_{p,k}\right| (-\tau)^k\norm*{2}{\mathbb{H}}^k\right|\\
&\leq&\left|\frac{1}{(2p+1)!}-c_{p,2p+1}\right|\tau^{2p+1}\norm*{2}{\mathbb{H}}^{2p+1}\\
&\leq&\left|\frac{1}{(2p+1)!}-c_{p,2p+1}\right|h_{\tau}^{2p+1}.
\end{eqnarray}
\end{proof}

Since $\mathbb{H}$ will be considered in general self adjoint, i.e., $\mathbb{H}^\dagger=\mathbb{H}$, we will have that $\mathbb{H}$ is normal, hence can be factored in the form $\mathbb{H}=\mathbb{V}\mathbb{D}\mathbb{V}^\ast$, with $\mathbb{D}:=\text{diag}\{d_i\}$, and taking $\Lambda:=R_{pp}(-i\tau\mathbb{D})$ we obtain
\begin{eqnarray}
\mathbb{U}&=&\mathbb{S}^+\mathbb{S}\\
&=&\mathbb{V}\Lambda^+\mathbb{V}^\ast\mathbb{V}\Lambda\mathbb{V}^\ast\\
&=&\mathbb{V}\Lambda^+\Lambda\mathbb{V}^\ast
\end{eqnarray}
wich implies the following result.
\begin{lemma}
$\mathbb{U}^\ast\mathbb{U}=\mathbf{1}$.
\end{lemma}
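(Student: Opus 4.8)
The plan is to lean on the spectral factorization $\mathbb{H}=\mathbb{V}\mathbb{D}\mathbb{V}^\ast$ already recorded above, which reduces the claim to a statement about a single diagonal matrix. Because $\mathbb{H}$ is self-adjoint its spectrum is real, so $\mathbb{D}=\mathrm{diag}\{d_i\}$ with every $d_i\in\mathbf{R}$, and $\mathbb{V}$ is unitary in the standard sense, $\mathbb{V}^\ast\mathbb{V}=\mathbf{1}$. Starting from $\mathbb{U}=\mathbb{V}\Lambda^+\Lambda\mathbb{V}^\ast$ and cancelling the inner $\mathbb{V}^\ast\mathbb{V}$, I would write $\mathbb{U}^\ast\mathbb{U}=\mathbb{V}(\Lambda^+\Lambda)^\ast(\Lambda^+\Lambda)\mathbb{V}^\ast$, so that the whole lemma collapses to showing that the diagonal factor $\Lambda^+\Lambda=R_{pp}(-i\tau\mathbb{D})$ is unitary, after which $\mathbb{V}\mathbf{1}\mathbb{V}^\ast=\mathbf{1}$ closes the argument.

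The heart of the proof is therefore entry-wise. Since $\mathbb{D}$ is diagonal, $\Lambda^+\Lambda$ is diagonal with entries $R_{pp}(-i\tau d_i)=N_{pp}(-i\tau d_i)/D_{pp}(-i\tau d_i)$, and I would show each such entry has modulus one. For the diagonal $(p,p)$ approximant the coefficient formulas of $N_{pp}$ and $D_{pp}$ coincide, giving $D_{pp}(z)=N_{pp}(-z)$; this is exactly the reflection identity behind the relation $D_{pp}(-i\tau\mathbb{H})=\mathbb{S}^\dagger$ already used above. Evaluating at a real eigenvalue $d_i$ and using that $\tau$, $d_i$ and the Padé coefficients are all real yields $D_{pp}(-i\tau d_i)=N_{pp}(i\tau d_i)=\overline{N_{pp}(-i\tau d_i)}$. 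Hence, with $z:=N_{pp}(-i\tau d_i)$, one gets $R_{pp}(-i\tau d_i)=z/\bar z$, so $|R_{pp}(-i\tau d_i)|=1$, and $\Lambda^+\Lambda$ is a diagonal unitary, i.e. $(\Lambda^+\Lambda)^\ast(\Lambda^+\Lambda)=\mathbf{1}$. Combining the two steps gives $\mathbb{U}^\ast\mathbb{U}=\mathbb{V}\mathbf{1}\mathbb{V}^\ast=\mathbf{1}$.

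I expect the delicate points to live in the reduction rather than in the modulus computation. First, one must genuinely use self-adjointness and not merely normality: only real eigenvalues produce the $z/\bar z$ structure, since $\overline{N_{pp}(-i\tau d)}=N_{pp}(i\tau\bar d)$ matches $D_{pp}(-i\tau d)=N_{pp}(i\tau d)$ precisely when $\bar d=d$. Second, one must check that $\Lambda^+=(\Lambda^\dagger)^{-1}$ actually exists, i.e. that $D_{pp}(-i\tau d_i)=\overline{N_{pp}(-i\tau d_i)}\neq 0$. This is the one genuinely nontrivial input: I would verify $N_{pp}(iy)\neq 0$ for all $y\in\mathbf{R}$, keeping the zeros of the diagonal Padé numerator off the imaginary axis (the A-stability property of the diagonal approximant, consistent with the boundedness restriction $\tau=h_t/\norm{}{\mathbb{H}}<1$ imposed earlier). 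Everything else is bookkeeping with the spectral factorization and the identity $D_{pp}(z)=N_{pp}(-z)$.
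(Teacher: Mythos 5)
Your argument is correct, and it shares the paper's skeleton (diagonalize $\mathbb{H}=\mathbb{V}\mathbb{D}\mathbb{V}^\ast$, strip off the unitary $\mathbb{V}$, reduce to the diagonal factor), but the finishing move is genuinely different. The paper never looks at individual eigenvalues: having already asserted $D_{pp}(-i\tau\mathbb{H})=\mathbb{S}^\dagger$, it treats $\Lambda^+=(\Lambda^\ast)^{-1}$ as a formal symbol and closes with pure commutative algebra on diagonal matrices, $\Lambda^\ast\Lambda^{-1}\Lambda^+\Lambda=\Lambda^\ast\Lambda^+=\mathbf{1}$ --- an argument that in fact shows $(\mathbb{S}^\ast)^{-1}\mathbb{S}$ is unitary for \emph{any} invertible normal $\mathbb{S}$, with no reference to Pad{\'e} coefficients. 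You instead prove unitarity scalar-by-scalar, writing each diagonal entry as $z/\bar z$ with $z=N_{pp}(-i\tau d_i)$ via the reflection identity $D_{pp}(w)=N_{pp}(-w)$ together with realness of $\tau$ and $d_i$. What your route buys is precisely the content the paper leaves implicit: you actually \emph{prove} the relation $D_{pp}(-i\tau\mathbb{H})=N_{pp}(-i\tau\mathbb{H})^\ast$ on the (real) spectrum, which the paper only states, and you flag the one genuinely nontrivial hypothesis --- that $N_{pp}$ has no zeros on the imaginary axis, so that $\Lambda^{-1}$ and $\Lambda^+$ exist at all --- which the paper silently assumes when it writes $\Lambda^{-1}$. (That nonvanishing also follows from coprimality of the diagonal Pad{\'e} pair: a common zero of $N_{pp}$ and $D_{pp}=\overline{N_{pp}}$ on $i\mathbf{R}$ would contradict it.) The cost is that your argument is tied to the diagonal $(p,p)$ approximant and to self-adjointness of $\mathbb{H}$, whereas the paper's formal cancellation is structurally more general; both are valid proofs of the lemma as stated.
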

\begin{proof}
\begin{eqnarray}
\mathbb{U}^\ast\mathbb{U}&=&\mathbb{V}\Lambda^\ast\Lambda^{-1}\mathbb{V}^\ast\mathbb{V}\Lambda^+\Lambda\mathbb{V}^\ast\\
&=&\mathbb{V}\Lambda^\ast\Lambda^{-1}\Lambda^+\Lambda\mathbb{V}^\ast\\
&=&\mathbb{V}\Lambda^\ast\Lambda^+\Lambda^{-1}\Lambda\mathbb{V}^\ast\\
&=&\mathbb{V}\mathbb{V}^\ast\\
&=&\mathbf{1}.
\end{eqnarray}
\end{proof}
From the above relations we can see that the operator
\begin{equation}
\hat{\mathbb{U}}:=\mathbb{W}_{\mathbb{H}}^{-1}\mathbb{U}\mathbb{W}_\mathbb{H}
\label{norm_ev}
\end{equation}
satisfies the following relations
\begin{equation}
\norm{}{\hat{\mathbb{U}}\phi}=\norm*{2}{\mathbb{W}_\mathbb{H}\mathbb{W}_\mathbb{H}^{-1}\mathbb{U}\mathbb{W}_\mathbb{H}\phi}=\norm*{2}{\mathbb{U}\mathbb{W}_\mathbb{H}\phi}=\norm*{2}{\mathbb{W}_\mathbb{H}\phi}=\norm{}{\phi}
\end{equation}
wich implies that $\|\hat{\mathbb{U}}\|=1$, the adjoint of $\hat{\mathbb{U}}$ can be obtained in the following way:
\begin{eqnarray}
\hat{\mathbb{U}}^\dagger&=&\mathbb{M}_{\mathbb{H}}^{-1}\hat{\mathbb{U}}^\ast\mathbb{M}_{\mathbb{H}}\\
&=&\mathbb{M}_{\mathbb{H}}^{-1}(\mathbb{W}_{\mathbb{H}}^{-1}\mathbb{U}\mathbb{W}_\mathbb{H})^\ast\mathbb{M}_{\mathbb{H}}\\
&=&\mathbb{M}_{\mathbb{H}}^{-1}\mathbb{W}_{\mathbb{H}}^\ast\mathbb{U}^\ast\mathbb{W}_\mathbb{H}^+\mathbb{M}_{\mathbb{H}}\\
&=&\mathbb{W}_{\mathbb{H}}^{-1}\mathbb{W}_{\mathbb{H}}^{+}\mathbb{W}_{\mathbb{H}}^\ast\mathbb{U}^\ast\mathbb{W}_\mathbb{H}^+\mathbb{W}_{\mathbb{H}}^\ast\mathbb{W}_{\mathbb{H}}\\
&=&\mathbb{W}_{\mathbb{H}}^{-1}\mathbb{U}^\ast\mathbb{W}_{\mathbb{H}}
\end{eqnarray}
the matrix Schr{\"o}dinger unitary group relative to $\mathbb{H}$, will have the form $\{\hat{\mathbb{U}}_k:=\hat{\mathbb{U}}^k(\cdot),k\in\mathbf{Z}^+\}$. It can be seen that
\begin{equation}
\hat{\mathbb{U}}^\dagger\hat{\mathbb{U}}=\mathbf{1}=\hat{\mathbb{U}}\hat{\mathbb{U}}^\dagger
\end{equation}
and this implies that the discrete time reversal Schr{\"o}dinger unitary group will be given by $\{\hat{\mathbb{U}}_{-k}:=(\hat{\mathbb{U}}^\dagger)^{k}(\cdot),k\in\mathbf{Z}^+\}$ and will be coherent with the local time reversibility of Schr{\"o}dinger unitary groups. From lemma L.\ref{lema_conv1} and taking the time interval $[0,m\tau]\subset \mathbf{R}$ we can obtain the following.

\begin{lemma} \label{lema_conv2} 
$\norm{}{e^{-im\tau\mathbb{H}}-\hat{\mathbb{U}}^m}\leq \frac{{m}^{2p+1}}{(2p+1)!}h_{\tau}^{2p+1}$.
\end{lemma}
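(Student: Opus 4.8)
The plan is to turn the global error over $[0,m\tau]$ into $m$ copies of the one-step error already controlled by Lemma \ref{lema_conv1}, exploiting that both the exact propagator and the approximant are isometries in $\norm{}{\cdot}$. Write $A:=e^{-i\tau\mathbb{H}}$ and $B:=\hat{\mathbb{U}}$, so that $e^{-im\tau\mathbb{H}}=A^{m}$ and the quantity to estimate is $\norm{}{A^{m}-B^{m}}$. The two facts I would record at the outset are that $A$ is a $\norm{}{\cdot}$-isometry, which is condition (iv) of the introduction and holds because $\mathbb{H}^{\dagger}=\mathbb{H}$, and that $B$ is a $\norm{}{\cdot}$-isometry, which is exactly the relation $\norm{}{\hat{\mathbb{U}}\phi}=\norm{}{\phi}$ established above; iterating either isometry gives $\norm{}{A^{j}}=\norm{}{B^{j}}=1$ for every $j$.

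First I would expand the difference by the telescoping identity
\begin{equation}
A^{m}-B^{m}=\sum_{j=0}^{m-1}A^{m-1-j}(A-B)B^{j},
\end{equation}
and then apply the triangle inequality together with the submultiplicativity of $\norm{}{\cdot}$. Since every factor $\norm{}{A^{m-1-j}}$ and $\norm{}{B^{j}}$ equals $1$, this collapses to
\begin{equation}
\norm{}{A^{m}-B^{m}}\leq m\,\norm{}{e^{-i\tau\mathbb{H}}-\hat{\mathbb{U}}}.
\end{equation}
This step is the crux of the argument: the fact that $\hat{\mathbb{U}}$ is genuinely unitary in the graph norm is precisely what forces the error to accumulate only linearly across the $m$ time steps instead of being amplified multiplicatively.

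It remains to control the one-step error $\norm{}{e^{-i\tau\mathbb{H}}-\hat{\mathbb{U}}}$ by Lemma \ref{lema_conv1}. Here I would pass to the Euclidean norm by conjugation with $\mathbb{W}_{\mathbb{H}}$, using $\norm{}{\mathbb{A}}=\norm*{2}{\mathbb{W}_{\mathbb{H}}\mathbb{A}\mathbb{W}_{\mathbb{H}}^{-1}}$ and $\mathbb{W}_{\mathbb{H}}\hat{\mathbb{U}}\mathbb{W}_{\mathbb{H}}^{-1}=\mathbb{U}$, which is immediate from $\hat{\mathbb{U}}=\mathbb{W}_{\mathbb{H}}^{-1}\mathbb{U}\mathbb{W}_{\mathbb{H}}$; this identifies the one-step graph-norm error with the Euclidean-norm error $\norm*{2}{e^{-i\tau\mathbb{H}}-\mathbb{U}}$ governed by Lemma \ref{lema_conv1}. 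Equivalently, one may simply observe that the proof of Lemma \ref{lema_conv1} uses only submultiplicativity, $\norm{}{\mathbf{1}}=1$, and the common power-series expansions of $e^{-i\tau\mathbb{H}}$ and $\mathbb{U}=R_{pp}(-i\tau\mathbb{H})$, so its chain of inequalities transfers verbatim to $\norm{}{\cdot}$ with $h_{\tau}=\tau\norm{}{\mathbb{H}}$. Either way one obtains $\norm{}{e^{-i\tau\mathbb{H}}-\hat{\mathbb{U}}}\leq\bigl|\tfrac{1}{(2p+1)!}-c_{p,2p+1}\bigr|h_{\tau}^{2p+1}$, and then $\bigl|\tfrac{1}{(2p+1)!}-c_{p,2p+1}\bigr|\leq\tfrac{1}{(2p+1)!}$ because the diagonal $[p/p]$ approximant reproduces the Taylor coefficients through order $2p$ and leaves a first residual coefficient lying between $0$ and $2/(2p+1)!$.

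Combining the last two displays yields $\norm{}{e^{-im\tau\mathbb{H}}-\hat{\mathbb{U}}^{m}}\leq \tfrac{m}{(2p+1)!}h_{\tau}^{2p+1}$, and since $m\leq m^{2p+1}$ for every $m\geq 1$ this is bounded by the stated $\tfrac{m^{2p+1}}{(2p+1)!}h_{\tau}^{2p+1}$; indeed the telescoping route gives the sharper linear-in-$m$ constant, while the looser power $m^{2p+1}$ is what results if one instead reads Lemma \ref{lema_conv1} over the whole interval $[0,m\tau]$ with $h_{\tau}$ replaced by $m h_{\tau}$. The one point demanding care is the one-step norm transfer: I must make sure the conjugation by $\mathbb{W}_{\mathbb{H}}$ is compatible with the self-adjointness of $\mathbb{H}$, so that $e^{-i\tau\mathbb{H}}$ is indeed a $\norm{}{\cdot}$-isometry and the spectral quantity $h_{\tau}$ is the same whether measured in $\norm{}{\cdot}$ or in $\norm*{2}{\cdot}$; once that compatibility is in hand, everything else is the routine coefficient bound and the telescoping sum above.
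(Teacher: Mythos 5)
Your proof is correct, but it takes a genuinely different route from the paper's. The paper argues by direct series comparison over the whole interval $[0,m\tau]$: it expands $e^{-im\tau\mathbb{H}}$ as $\sum_k(-im\tau\mathbb{H})^k/k!$ and $\hat{\mathbb{U}}^m$ as the $m$-th power of the Pad{\'e} series $\sum_k c_{p,k}(-i\tau\mathbb{H})^k$, cancels everything through order $2p$, and bounds the tail by its leading term; this is exactly the ``looser reading of Lemma \ref{lema_conv1} with $h_\tau$ replaced by $mh_\tau$'' that you describe in your closing remarks, and it is where the factor $m^{2p+1}/(2p+1)!$ comes from. Your telescoping identity $A^m-B^m=\sum_{j=0}^{m-1}A^{m-1-j}(A-B)B^j$, combined with the isometry of both propagators (which the paper does establish just before the lemma via $\norm{}{\hat{\mathbb{U}}\phi}=\norm{}{\phi}$ and $\mathbb{U}^\ast\mathbb{U}=\mathbf{1}$), is the standard Lax-type stability argument; it buys a sharper bound, $m\,h_\tau^{2p+1}/(2p+1)!$, of which the stated estimate is a weakening via $m\leq m^{2p+1}$, and it avoids the bookkeeping of the multinomial coefficients of $(\sum_k c_{p,k}z^k)^m$ that the paper's chain of inequalities leaves largely implicit. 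The one genuine subtlety is the norm transfer you yourself flag: Lemma \ref{lema_conv1} controls $\norm*{2}{e^{-i\tau\mathbb{H}}-\mathbb{U}}$ while your telescoping sum needs $\norm{}{e^{-i\tau\mathbb{H}}-\hat{\mathbb{U}}}$, and conjugation by $\mathbb{W}_\mathbb{H}$ sends $e^{-i\tau\mathbb{H}}$ to $e^{-i\tau\mathbb{W}_\mathbb{H}\mathbb{H}\mathbb{W}_\mathbb{H}^{-1}}$ rather than fixing it; your fallback of rerunning the proof of Lemma \ref{lema_conv1} directly in $\norm{}{\cdot}$ (which uses only submultiplicativity) closes this cleanly, and in fact the paper itself silently identifies $\hat{\mathbb{U}}$ with $\mathbb{U}$ at this point. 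Your coefficient estimate $\left|\tfrac{1}{(2p+1)!}-c_{p,2p+1}\right|=\tfrac{(p!)^2}{(2p)!(2p+1)!}\leq\tfrac{1}{(2p+1)!}$ is also correct and is needed to land on the stated constant.
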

\begin{proof}
If we denote by $c_{p,k}$ the multinomial Pad{\'e} coefficients, then we will get
\begin{eqnarray}
\norm{}{e^{-im\tau\mathbb{H}}-\hat{\mathbb{U}}^m}&=&\norm{}{\sum_{k=0}^\infty \frac{(-im\tau\mathbb{H})^k}{k!}-(\sum_{k=0}^\infty c_{p,k}(-i\tau\mathbb{H})^k)^m}\\
&\leq&\norm{}{\sum_{k=2p+1}^\infty (\frac{m^k}{k!}-c_{p,k})(-i\tau\mathbb{H})^k}\\
&\leq&\left|\sum_{k=2p+1}^\infty (\frac{m^k}{k!}-c_{p,k})(-\tau)^k\norm{}{\mathbb{H}}^k\right|\\
&\leq&\frac{m^{2p+1}}{(2p+1)!}h_\tau^{2p+1}.
\end{eqnarray}
\end{proof}

Now if we denote by $\{\hat{\mathbb{U}}_\alpha\}$ the sequence of matrix approximants corresponding to the sequence of matrix Schr{\"o}dinger basics $\{e^{-i\tau\mathbb{H}_\alpha}\}$, and if we take $\{h_\alpha\}$ to be the sequence of basic time step sizes considered for the approximation of each matrix Schr{\"o}dinger basics, and if we take $e^{-im\tau\mathbb{H}_0}$ defined according to \eqref{exp_kron} and the non-interacting approximant $\hat{\mathbb{U}}\in M_N(\mathbf{C})$ defined by
\begin{equation}
\hat{\mathbb{U}}:=\bigotimes_{\braket{\alpha}}\hat{\mathbb{U}}_\alpha
\end{equation}
we can obtain the following estimate.
\begin{theorem} \label{teorema_conv} There exists $h\in (0,1)\subset \mathbf{R}^+$, such that $\norm{}{e^{-im\tau\mathbb{H}_0}-\hat{\mathbb{U}}^m}\leq (2^M-1)m^{2p+1}/(2p+1)!h^{2p+1}$.
\end{theorem}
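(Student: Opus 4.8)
The plan is to reduce the estimate to the per-factor bound of Lemma~\ref{lema_conv2} via a multilinear (telescoping) expansion of the difference of two Kronecker products, and then to collapse the resulting mixed-order terms into a single common bound. First I would use \eqref{exp_kron} to write $e^{-im\tau\mathbb{H}_0}=\bigotimes_{\braket{\alpha}}e^{-im\tau\mathbb{H}_\alpha}$, and the mixed-product property of the Kronecker product to write $\hat{\mathbb{U}}^m=(\bigotimes_{\braket{\alpha}}\hat{\mathbb{U}}_\alpha)^m=\bigotimes_{\braket{\alpha}}\hat{\mathbb{U}}_\alpha^m$. Setting the per-factor error $\Delta_\alpha:=e^{-im\tau\mathbb{H}_\alpha}-\hat{\mathbb{U}}_\alpha^m$, Lemma~\ref{lema_conv2} gives $\norm{}{\Delta_\alpha}\leq m^{2p+1}h_\alpha^{2p+1}/(2p+1)!$ for each $\alpha$.

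Next I would expand $\bigotimes_{\braket{\alpha}}e^{-im\tau\mathbb{H}_\alpha}=\bigotimes_{\braket{\alpha}}(\hat{\mathbb{U}}_\alpha^m+\Delta_\alpha)$ by multilinearity of the tensor product. This produces one term for every subset $S\subseteq\{1,\dots,M\}$, where the factor in slot $\alpha$ is $\Delta_\alpha$ when $\alpha\in S$ and $\hat{\mathbb{U}}_\alpha^m$ when $\alpha\notin S$; subtracting the full $\hat{\mathbb{U}}^m$ term, which is exactly the $S=\emptyset$ contribution, leaves precisely the $2^M-1$ terms indexed by the non-empty subsets. Taking $\norm{}{\cdot}$ and applying the triangle inequality, together with the fact that the graph-induced norm is multiplicative over Kronecker products (because the inner-product square root of the tensor space factors as $\mathbb{W}_{\mathbb{H}_0}=\bigotimes_{\braket{\alpha}}\mathbb{W}_{\mathbb{H}_\alpha}$, so that $\norm{}{\bigotimes_{\braket{\alpha}}\mathbb{A}_\alpha}=\prod_\alpha\norm{}{\mathbb{A}_\alpha}$), bounds each $S$-term by a product of factor norms.

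Then I would invoke the unitarity established earlier, namely $\norm{}{\hat{\mathbb{U}}_\alpha}=1$, so that $\norm{}{\hat{\mathbb{U}}_\alpha^m}\leq 1$ and every surviving factor in a slot $\alpha\notin S$ contributes at most $1$; hence the $S$-term is dominated by $\prod_{\alpha\in S}\norm{}{\Delta_\alpha}$. Using a common step size $h\in(0,1)$ across the factors and writing $\delta:=m^{2p+1}h^{2p+1}/(2p+1)!$, the per-factor bound gives $\norm{}{\Delta_\alpha}\leq\delta$ for all $\alpha$, so summing over the non-empty subsets yields $\norm{}{e^{-im\tau\mathbb{H}_0}-\hat{\mathbb{U}}^m}\leq\sum_{\emptyset\neq S}\delta^{|S|}=\sum_{k=1}^{M}\binom{M}{k}\delta^{k}=(1+\delta)^M-1$, which collapses to the claimed $(2^M-1)\delta$ once $\delta\leq 1$.

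The main obstacle is exactly this last collapse: the raw expansion produces terms $\delta^{|S|}$ of mixed orders, and it is only legitimate to dominate $\delta^{|S|}$ by $\delta$ for every $|S|\geq 1$ (so that all $2^M-1$ terms share the single bound $\delta$, giving $(1+\delta)^M-1\leq(2^M-1)\delta$) when $\delta\leq 1$. This is where the existential choice of $h$ enters: for fixed $m$ and $p$ one selects $h\in(0,1)$ small enough that $m^{2p+1}h^{2p+1}/(2p+1)!\leq 1$, which is always possible since the left-hand side tends to $0$ as $h\to0^+$. I would make sure to state this admissibility condition on $h$ explicitly, since it is precisely what licenses the transition from the exact multilinear sum to the clean combinatorial factor $2^M-1$.
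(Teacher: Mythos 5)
Your proposal is correct and follows essentially the same route as the paper: the same multilinear expansion of the difference of Kronecker products over the $2^M-1$ non-empty subsets of factors (the paper encodes these via multi-indexes $r,s$ with $|r|+|s|=M$, $r\cdot s=0$, $|r|>0$), the same per-factor bound from Lemma~\ref{lema_conv2}, and the same use of $\norm{}{\hat{\mathbb{U}}_\alpha^m}=1$ to discard the unitary slots. The only difference is that you make explicit the admissibility condition $m^{2p+1}h^{2p+1}/(2p+1)!\leq 1$ needed to dominate the higher-order terms $\delta^{|S|}$ by $\delta$, which the paper's final inequality uses silently when passing from $\prod_{\braket{r}}\frac{m^{2p+1}}{(2p+1)!}h_r^{2p+1}$ to the single-power bound; this is a worthwhile clarification consistent with the existential phrasing of the theorem.
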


\begin{proof}
If we define $h:=\sup_{\alpha}{h_\alpha}$ and take $\Delta_{m,\alpha}:=(e^{-im\tau\mathbb{H}_\alpha}-\hat{\mathbb{U}}_\alpha^m)$ we can use the operation
\begin{equation}
\{\mathbb{A}_r\}{\odot}\{\mathbb{B}_s\}:=\bigotimes_{p=1}^M (\mathbb{A}_\alpha(e_p\cdot r_p)+\mathbb{B}_p (e_p\cdot s_p)) \nonumber
\end{equation}
where $r,s\in(\mathbf{Z}_0^+)^M$ are multi-indexes that that satisfy $\norm*{\infty}{r}=\norm*{\infty}{s}=1$, $|s|+|r|=M$, $r\cdot s=0$ and $|r|>0$, to express $e^{-im\tau\mathbb{H}_0}-\hat{\mathbb{U}}^m$ in the form
\begin{eqnarray}
e^{-im\tau\mathbb{H}_0}-\hat{\mathbb{U}}^m&=&\bigotimes_{\braket{\alpha}}e^{-im\tau\mathbb{H}_\alpha}-\bigotimes_{\braket{\alpha}}\hat{\mathbb{U}}_\alpha\\
&=&\sum_{\braket{|r|+|s|=M}}\{\Delta_{m,r}\}{\odot}\{\mathbb{U}^m_s\}
\end{eqnarray}
Therefore,
\begin{eqnarray}
\norm{}{e^{-im\tau\mathbb{H}_0}-\hat{\mathbb{U}}^m}&=&\norm{}{ \sum_{\braket{|r|+|s|=M}}  \{\Delta_{m,r}\}{\odot}\{\hat{\mathbb{U}}^m_s\}}\\
&\leq& \sum_{\braket{|r|\leq M}} \prod_{\braket{r}}\norm{}{\Delta_{m,r}}\\
&\leq& \sum_{\braket{|r|\leq M}}\prod_{\braket{r}}\frac{m^{2p+1}}{(2p+1)!}h_r^{2p+1}\\
&\leq& \frac{(2^M-1)m^{2p+1}}{(2p+1)!}h^{2p+1}.
\end{eqnarray}
wich provides the desired result.
\end{proof}

\section{Numerical Examples}

In this section we present some basic examples to ilustrate the implementation of the ideas presented here to compute the evolution of states in a particular finite dimensional quantum dynamical system.

\begin{example}

For a double-slit experiment under simple-absorption photonic conditions, modeled by a quantum dynamical system with a space of states $\mathcal{H}\cong \mathbf{C}^5$, and described by the representative graph $G:=(V_G,E_G)$ with vertex and edge sets given by
\begin{eqnarray*}
V_G&:=&\{1,2,3,4,5\}\\ E_G&:=&\{(\braket{1;2},1),(\braket{1;3},1),(\braket{2;4},1),(\braket{2;5},1),(\braket{3;4},1),(\braket{3;5},1)\}
\end{eqnarray*}
that is isomorphic to
\begin{figure}[h]%
\caption{Representative Graph $G:=(V_G,E_G)$ of a Double-Slit Experiment.}%
\label{graph}%
\begin{center}
        \begin{tikzpicture}[node distance=12mm]
        \tikzstyle{every node}=
        [%
            minimum size=2mm,%
            inner sep=0pt,%
            outer sep=0pt,%
            ball color=black,%
          fill=black!50!black!50,%
          draw=black!50!black,%
          circle,%
        ]

        \node (1) {};
        \node (2) [right of=1] {};
        \node (3) [below of=1] {};
        \node (4) [left of=1] {};
        \node (5) [above of=1] {};


        \path [-,black,thick](1) 
                                edge  (3)
                                edge (5)  
                             (5) edge (2)
                             (2) edge (3)
                             (3) edge (4)
                             (4) edge (5);
                            
      \end{tikzpicture}
      \end{center}
\end{figure}
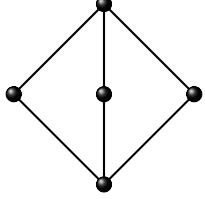
If we take $\hat{X}:=\{e_k\}\subset \mathcal{H}$ with $e_k:=(\delta_{k,j})_j$, then we will have that
\begin{eqnarray}
\mathbb{N}&:=&\left(
\begin{array}{ccccc}
1&0&0&0&0\\
0&2&0&0&0\\
0&0&3&0&0\\
0&0&0&4&0\\
0&0&0&0&5
\end{array}
\right)\\
a^\dagger&:=&\left(
\begin{array}{ccccc}
0&0&0&0&1\\
\sqrt{2}&0&0&0&0\\
0&\sqrt{3}&0&0&0\\
0&0&2&0&0\\
0&0&0&\sqrt{5}&0
\end{array}
\right)\\
a&:=&\left(
\begin{array}{ccccc}
0&\sqrt{2}&0&0&0\\
0&0&\sqrt{3}&0&0\\
0&0&0&2&0\\
0&0&0&0&\sqrt{5}\\
1&0&0&0&0
\end{array}
\right)
\end{eqnarray}
clearly $\mathbb{N}=a^\dagger a$, now, if the Hamiltonian $\mathbb{H}\in M_5(\mathbf{C})$ of the system is defined by
\begin{equation}
\mathbb{H}:=\mathbb{A}(G):=\left(
\begin{array}{ccccc}
0&1&1&0&0\\
1&0&0&1&1\\
1&0&0&1&1\\
0&1&1&0&0\\
0&1&1&0&0
\end{array}
\right)
\end{equation}
then we will have that $e^{-i\tau\mathbb{H}}:=Ve^{-i\tau\mathbb{\lambda}}V^\ast$, with
\begin{eqnarray}
V&:=&
\left(\begin{array}{ccccc} 1 & 1 & 0 & -1 & -1\\ -\frac{\sqrt{6}}{2} & \frac{\sqrt{6}}{2} & -1 & 0 & 0\\ -\frac{\sqrt{6}}{2} & \frac{\sqrt{6}}{2} & 1 & 0 & 0\\ 1 & 1 & 0 & 1 & 0\\ 1 & 1 & 0 & 0 & 1 \end{array}\right)\\
e^{-i\tau\Lambda}&:=&
\left(\begin{array}{ccccc} \mathrm{e}^{\sqrt{6}\, t\, \mathrm{i}} & 0 & 0 & 0 & 0\\ 0 & \mathrm{e}^{- \sqrt{6}\, t\, \mathrm{i}} & 0 & 0 & 0\\ 0 & 0 & 1 & 0 & 0\\ 0 & 0 & 0 & 1 & 0\\ 0 & 0 & 0 & 0 & 1 \end{array}\right).
\end{eqnarray}
also we will have that the Third order Pad{\'e} approximant that coincides with the Crank-Nicholson scheme will have the form
\begin{equation}
\hat{\mathbb{U}}:=
\left(\begin{array}{ccccc} \frac{\tau^2 + 2}{3\, \tau^2 + 2} & -\frac{2\, \tau\, \mathrm{i}}{3\, \tau^2 + 2} & -\frac{2\, \tau\, \mathrm{i}}{3\, \tau^2 + 2} & -\frac{2\, \tau^2}{3\, \tau^2 + 2} & -\frac{2\, \tau^2}{3\, \tau^2 + 2}\\ -\frac{2\, \tau\, \mathrm{i}}{3\, \tau^2 + 2} & \frac{2}{3\, \tau^2 + 2} & -\frac{3\, \tau^2}{3\, \tau^2 + 2} & -\frac{2\, \tau\, \mathrm{i}}{3\, \tau^2 + 2} & -\frac{2\, \tau\, \mathrm{i}}{3\, \tau^2 + 2}\\ -\frac{2\, \tau\, \mathrm{i}}{3\, \tau^2 + 2} & -\frac{3\, \tau^2}{3\, \tau^2 + 2} & \frac{2}{3\, \tau^2 + 2} & -\frac{2\, \tau\, \mathrm{i}}{3\, \tau^2 + 2} & -\frac{2\, \tau\, \mathrm{i}}{3\, \tau^2 + 2}\\ -\frac{2\, \tau^2}{3\, \tau^2 + 2} & -\frac{2\, \tau\, \mathrm{i}}{3\, \tau^2 + 2} & -\frac{2\, \tau\, \mathrm{i}}{3\, \tau^2 + 2} & \frac{\tau^2 + 2}{3\, \tau^2 + 2} & -\frac{2\, \tau^2}{3\, \tau^2 + 2}\\ -\frac{2\, \tau^2}{3\, \tau^2 + 2} & -\frac{2\, \tau\, \mathrm{i}}{3\, \tau^2 + 2} & -\frac{2\, \tau\, \mathrm{i}}{3\, \tau^2 + 2} & -\frac{2\, \tau^2}{3\, \tau^2 + 2} & \frac{\tau^2 + 2}{3\, \tau^2 + 2} \end{array}\right)
\end{equation}
by L.\ref{lema_conv2} we will have that $\|e^{-im\tau\mathbb{H}}-\hat{\mathbb{U}}^m\|\leq \frac{m^3}{6}h_\tau^3$. In particular the operator $\mathbb{N}\in M_5(\mathbf{C})$ can be used to compute the expected state of the system using the expression $\lfloor\braket{\mathbb{N}}_k\rceil$, with $\lfloor q\rceil:=\{p\in\mathbf{Z}:|q-p|=\min\{|q-r|,r\in V_G\}\}$ and where
\begin{equation}
\braket{\mathbb{N}}_k:=\bra{\psi_0}\hat{\mathbb{U}}_k^\dagger\mathbb{N}\hat{\mathbb{U}}_k\ket{\psi_0}=\bra{\psi_0}\hat{\mathbb{U}}_k^\dagger a^\dagger a\hat{\mathbb{U}}_k\ket{\psi_0}=\|a\hat{\mathbb{U}}_k\psi_0\|.
\end{equation}
where $\ket{\psi_0}$ is the initial state of the system.
\end{example}

\begin{example}
In this example we will consider a quantum system with three particles that evolve in a Fock $\mathcal{H}^{\otimes3}$ space based on the space of states of the above example, i.e., $\mathcal{H}^{\otimes3}:=\mathcal{H}\otimes\mathcal{H}\otimes\mathcal{H}$, also we will consider that $\hat{X}^{\otimes3}:=\{\ket{ijk}, i,j,k\in\{1,\cdots,5\}\}$ and that the hamiltonian of the system will be an interacting hamiltonian, in this particular case will be given by $\hat{\mathbb{H}}:=\mathbb{H}\oplus\mathbb{H}\oplus\mathbb{H}+\omega_0^3\mathbf{1}^{\otimes3}$, where $\omega_0\in\mathbf{C}$ is an absorption constant related to the media where the system evolves, the group basics $e^{-i\tau\hat{\mathbb{H}}}$ and its third order Pad{\'e} approximant will be given by
\begin{eqnarray}
e^{-i\tau\hat{\mathbb{H}}}&:=&e^{i\tau\omega_0^3}(e^{-i\tau\mathbb{H}}\otimes e^{-i\tau\mathbb{H}}\otimes e^{-i\tau\mathbb{H}})\\
\hat{\mathbb{U}}&:=&(1-i\tau\omega_0^3)^{-1}(1+i\tau\omega_0^3)\mathbb{U}\otimes\mathbb{U}\otimes\mathbb{U}
\end{eqnarray}
respectively, this example is not very complex yet, but even this simple example provides an idea of how useful can be T.\ref{teorema_conv} to obtain an estimate of the form $\|e^{-im\tau\hat{\mathbb{H}}}-\hat{\mathbb{U}}^m\|\leq \frac{2m^3}{3}h^3$. In this case we can also find an expression for the expected state of the system that will have the form
\begin{eqnarray}
\lfloor\lfloor \braket{\hat{\mathbb{N}}}_k \rceil\rceil&:=&\lfloor\braket{\hat{\mathbb{N}}_1}_k\rceil \lfloor\braket{\hat{\mathbb{N}}_2}_k\rceil \lfloor\braket{\hat{\mathbb{N}}_3}_k\rceil\\
&=&\lfloor\norm{}{a\mathbb{U}\psi_{0,1}}\rceil\lfloor\norm{}{a\mathbb{U}\psi_{0,2}}\rceil\lfloor\norm{}{a\mathbb{U}\psi_{0,3}}\rceil
\end{eqnarray}
where $\ket{\psi_0}=\ket{\psi_{0,1}}\otimes\ket{\psi_{0,3}}\otimes\ket{\psi_{0,3}}$ is the initial state of the system.
\end{example}

\begin{center}
 \normalsize\scshape{Acknowledgements}
\end{center}

\medskip

I want to say Thanks: To Hashem for everything, to Mirna, for her love, for her support, for making me laugh... to Stanly Steinberg for his support, advice and for so many useful comments and suggestions, to Francisco Figeac for a great conversation about applications of algebraic graph theory and to Concepci{\'o}n Ferrufino, Rosibel Pacheco and Jorge Destephen for all their support and advice.

\end{document}